\newcommand{\mytitle}{On Model Predictive Funnel Control with Equilibrium Endpoint Constraints*} 
\title{\mytitle}
\author{Jens Göbel$^{1,4}$ \and 		
	Dario Dennstädt$^{2,3}$ \and		
	Lukas Lanza$^{2}$ \and 			
	Karl Worthmann$^{2}$ \and  		
	Thomas Berger$^{3}$ \and 		
	Tobias Damm$^{1,4}$ 			
   \thanks{* T.~Berger, D.~Dennstädt, L.~Lanza and K.~Worthmann gratefully acknowledge funding by the Deutsche Forschungsgemeinschaft (DFG, German Research Foundation)~-- Project-ID 471539468.
   This work was developed within the European project INTEREST. The project has received funding from the European Union's Horizon Europe Research and Innovation funding programme under GA No. 101160594. Views and opinions expressed are however those of the author(s) only and do not necessarily reflect those of the European Union. Neither the European Union nor the granting authority can be held responsible for them.}
   \thanks{$^{1}$Fraunhofer Institute for Industrial Mathematics (ITWM), 67663 Kaiserslautern, Germany, 
   {\ttfamily\small jens.goebel@itwm.fraunhofer.de}}
   \thanks{$^{2}$\changed{Optimization-based Control Group, TU Ilmenau}, Ilmenau, Germany, {\ttfamily\small $\{$lukas.lanza, karl.worthmann$\}$@tu-ilmenau.de}}
   \thanks{$^{3}$Universit\"at Paderborn, Institut f\"ur Mathe\-ma\-tik, Warburger Str.~100, 33098~Paderborn, Germany,
   {\ttfamily\small  thomas.berger@math.upb.de, dario.dennstaedt@uni-paderborn.de}}
   \thanks{$^4$Rheinland-Pfälzische Technische Universität Kaiserslautern-Landau, 67663 Kaiserslautern, Germany, 
   {\ttfamily\small tdamm@rptu.de}}%
    }
\date{\today}
\pgfplotsset{compat=1.18} 
\crefname{algoline}{line}{lines}
\newcommand{\fullref}[1]{\hyperref[{#1}]{\cref*{#1}: \enquote{\textit{\nameref*{#1}}}}} 
\newcommand{\Fullref}[1]{\hyperref[{#1}]{\Cref*{#1}: \enquote{\textit{\nameref*{#1}}}}}
\newtheorem{lemma}{Lemma}
\newtheorem{theorem}[lemma]{Theorem} 
\newtheorem{corollary}[lemma]{Corollary}
\newtheorem{algo}[lemma]{Algorithm}
\newtheorem{definition}[lemma]{Definition}
\crefname{problem}{problem}{problems}
\newcommand{\R}{\mathbb{R}}
\newcommand{\N}{\mathbb{N}}
\newcommand{\dd}[2][ ]{\tfrac{\text{\normalfont d}#1}{\text{\normalfont d}#2}}
\newcommand{\defeq}{\mathrel{\coloneqq}}
\newcommand{\T}{\mathsf{T}}
\def\changed#1{#1}
\begin{document}
    \maketitle
    
    \begin{abstract}
    We propose \emph{model predictive funnel control}, a novel model predictive control (MPC) scheme building upon recent results in funnel control. 
    The latter is a high-gain feedback methodology that achieves evolution of the measured output within predefined error margins.
    The proposed method dynamically optimizes a parameter-dependent error boundary in a receding-horizon manner, thereby combining prescribed error guarantees from funnel control with the predictive advantages of MPC.
    \changed{On the one hand, this approach promises faster optimization times due to a reduced number of decision variables, whose number does not depend on the horizon length.
    On the other hand, the continuous feedback law improves the robustness and also explicitly takes care of the inter-sampling behavior. }
    We focus on proving stability by leveraging results from MPC stability theory with terminal equality constraints. Moreover, we rigorously show initial and recursive feasibility.
    \end{abstract}
\textit{Keywords}: Adaptive control, Funnel control, Nonlinear output feedback, Predictive control for nonlinear systems, Prescribed transient behavior

\section{Introduction}
    We address the problem of stabilizing a nonlinear 
    control system subject to time-varying output constraints given 
    by user-specific performance bounds (funnel boundaries). 
    To achieve this, we propose a novel bi-level control framework that synergies \emph{funnel control}
    and \emph{model predictive control} (MPC): 
    \begin{itemize}
        \item A lower-level fixed funnel control law ensuring constraint satisfaction.
        \item An upper-level MPC-based optimizer tuning the funnel control parameters (see~\Cref{Fig:RobustControllerScheme}).
    \end{itemize}
    \textbf{Funnel control} is a model-free controller that achieves output reference tracking, first described in \cite{Ilchmann2002}. 
    It has since garnered significant attention, 
    see, e.g., \changed{the recent survey article~\cite{berger2021funnel} and references therein. }
    \changed{Funnel control has been successfully applied in many fields of engineering. Examples include speed control of wind turbines, position control of robotic manipulators or current control of synchronous machines, cf.~\cite{Hackl2017}.}
    \changed{
    A similar concept, namely prescribed performance control (PPC), introduced in~\cite{bechlioulis2008robust} and further developed e.g.\ in~\cite{bechlioulis2014low}, has recently been linked to control barrier functions  for reactive feedback design~\cite{namerikawa2024equivalence}. 
    Both PPC and funnel control are high-gain adaptive control methods that enforce tracking within error bounds by scaling the gain with the inverse of the distance between tracking error and given boundary. 
    Under certain structural conditions, such as matched input-output dimensions and a well-defined relative degree, these methods ensure bounded tracking errors. 
    While earlier funnel control required nonzero funnel diameters, recent works~\cite{lee2019asymptotic,berger2021funnel} allow asymptotic tracking for systems with higher relative degree. 
    The present paper builds on~\cite{lanza2024exact}, which demonstrates exact tracking in finite time using funnel control.
    }
    
    \textbf{MPC} 
    is a well-established and versatile control methodology that \changed{is widely applied both in research and industry, see, e.g., \cite{Lee2011} or the recent survey~\cite{badgwell2021model}}.  
    Unlike the model-free funnel control, MPC relies on a model of the controlled process.
    It optimizes the predicted system behavior over a discretized input signal according to a pre-defined cost function.
    While powerful, the computational complexity of solving this optimization problem within strict sampling time constraints \changed{poses significant difficulties in real-time applications, see, e.g., \cite{jerez2014embedded,gros2020linear} and the references therein}. 
    Beyond that, the open-loop application of the control signal between optimization steps limits 
    its robustness against disturbances. 
    \changed{Common approaches} to ensure constraint satisfaction under external disturbances or even under model mismatch are, e.g., tube-based MPC \cite{mayne2005robust,mayne2011tube} \changed{or min-max MPC schemes, see, e.g.,~\cite{limon2006input,xie2024data}, where the latter considers a data-driven scheme}. 
    
	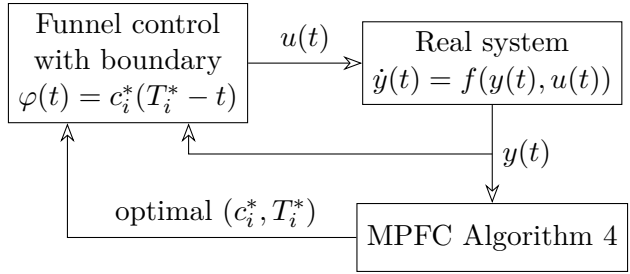
\begin{figure}[ht]
	\begin{center}
		\begin{tikzpicture} [
			>={Stealth[length=2.5mm,open]},
			align=center, 
			block/.style={draw,minimum height=0.9cm} ]
			
			\node [block] 						(System) 	{Real system \\ $\dot y(t) = f(y(t),u(t))$};
			\node [block, left=1.5cm of System] (FC) 		{Funnel control \\ \changed{with boundary} \\ $\varphi(t) = c_i^*(T_i^*-t)$};
			\node [block, below=1.3cm of System] 	(MPC) 	{\changed{MPFC~\Cref{alg:fmpc}}};
			
			\draw[->] (FC) -- node[above] {$u(t)$} (System); 
			\draw[->] (System) -- (MPC) node[pos=0.5,right] {$y(t)$} coordinate[pos=0.5] (y1); 
			\draw[->] (y1) -| (FC.-45); 
			\draw[->] (MPC) -| (FC.-135) node[above,pos=0.05,anchor=south east] {\changed{optimal~$(c_i^*,T_i^*)$}};
		\end{tikzpicture}
	\end{center}
	\caption{
            \footnotesize
            \changed{Structure of the proposed bi-level controller, i.e.,}
		feedback control for unknown real system with optimal funnel function parameters \changed{$(c,T)$}.
		The model is initialized with system \changed{output} data~$y(t_i)$. 
		Then, \changed{the MPFC~\Cref{alg:fmpc}} provides parameters~$(c_i^*,T_i^*)$ to the
		funnel controller~\eqref{eq:ExactFunnelControl}. 
        The latter is applied continuously to the system. 
	}
	\label{Fig:RobustControllerScheme}
	\end{figure}

    Our integrated method, termed \emph{model predictive funnel control} (MPFC), merges MPC and funnel control by
    optimizing a parameter-dependent funnel shape using an MPC-like strategy.
    \changed{That is, we seek an optimal feedback among a restricted family of feedback laws.
    At each MPFC step, funnel-shape parameters are optimized for the current prediction horizon in the closed-loop system. This reduces optimization variables compared to classical MPC, where the decision vector grows with the horizon length. MPFC also ensures a stabilizing control law between recalculation steps, enhancing robustness over traditional MPC, which uses open-loop control until the next iteration. Since we apply a funnel controller, performance is maintained even if the model doesn’t fully match the real system. 
    Stability is proven using equilibrium endpoint constraints from classical MPC theory, cf.~\cite{keerthi1988optimal}. This stability relies on MPC predictions reaching the system's equilibrium by the end of the prediction horizon through additional constraints, enabling a cost-neutral horizon extension and proving bounded closed-loop costs. For the MPFC version presented here, results from~\cite{lanza2024exact} naturally ensure that the equilibrium is reached within the prediction horizon, without extra constraints.
    }
    
    While prior work has explored combinations of funnel control and MPC under
    the name \emph{funnel MPC} in a series of papers, see \cite{berger2020learning,BergDenn21,Berger2024FMPC} 
    and the references therein, our approach differs fundamentally in both
    methodology and scope. 
    Existing funnel MPC implementations retain the classical funnel control
    paradigm of a static, user-defined funnel shape and instead optimize only
    the applied control values. This penalizes proximity of the system output to
    the given boundaries via rising costs but does not adapt the funnel
    itself. Other MPC approaches that consider prescribing output constraints 
    usually focus on discrete-time systems instead \cite{kohler2022constrained}.
    
    The funnel shape considered in this work, as well as in~\cite{lanza2024exact}, differs from classical funnel control in that it reaches zero in prescribed finite time~$T > 0$. 
    Due to this singularity in $T$, the closed-loop dynamics require infinitely small step sizes in the numerical solution.
    The results developed in this brief paper prove the feasibility of the proposed control algorithm from a theoretical perspective.
   
	\medskip
    {
    \textbf{Nomenclature.}
    For $x,y \in \R^n$, we use $x^\top y =: \langle x, y \rangle$, and $\|x\| = \sqrt{\langle x,x \rangle}$.
    For $V\subseteq\R^m$, we denote by $\mathcal{C}^k(V;\R^{n})$ the set of $k$-times continuously differentiable functions $f\colon V\to \R^{n}$. 
    For an interval $I\subseteq\R$ and $k\in\N$, $L^\infty(I; \R^{n})$ is the Lebesgue space of measurable, essentially bounded functions $f \colon I\to\R^n$ with norm $\|f \|_{\infty} = \text{esssup}_{t \in I} \|f(t)\|$. 
    
    }
    
    \section{System class and control objective} \label{sec:systemClassAndControlObjective}
    We introduce the system class under consideration and formulate the control objective. 
    Consider a non-linear multi-input multi-output control system
    \begin{align} \label{eq:system}
        \dot{y}(t) &= f(y(t),u(t)), \qquad y(0) = \hat{y}_0 \in \R^m,
    \end{align}
    where~$y(t) \in \R^m$ \changed{and~$u(t) \in \R^m$ denote the output and 
    the input 
    at time~$t \ge 0$, respectively.
    Note that the dimension of the input and output coincide. 
    We call a (locally) absolutely continuous function $y: [0, \omega) \to \R^m$, $\omega\in (0,\infty]$, with $y(0) =y^0 \in \R^m$ a solution (in the sense of Carathéodory) to~\eqref{eq:system}, if~$y$ satisfies~\eqref{eq:system} for almost all~$t \in [0,\omega)$. 
    A solution~$y$ is maximal, if it has no right extension that is also a solution; it is global, if $\omega=\infty$}.
    The function~$f \in \mathcal{C}(\R^m \times \R^m; \R^m)$ is assumed to satisfy the following high-gain property. 
    \begin{definition}[{High-gain property {\cite[Def. 1.2]{berger2021funnel}}}]
    \label[definition]{def:highgainproperty}
        For $m\in\N$, a function $f\in\mathcal{C}(\R^m\times\R^m;\R^m)$ satisfies the \emph{high-gain property}, if there exists $\nu\in (0,1)$ such that, for every compact $K\subset \R^m$, the continuous function 
        \begin{align*}
            \chi: \R & \to \R, \\
            s &\mapsto \min \left\{ \langle v, f(z,-sv) \rangle  |  z \in K, v \in \R^m, \nu \le \|v\| \le 1  \right\}
        \end{align*}
        satisfies $\sup_{s\in\R} \chi(s)=\infty$.
    \end{definition}
     
    \par\medskip 
    Intuitively, the high-gain property states that the system reacts fast, if the input is large.
    It is therefore a core ingredient when aiming for tracking with prescribed performance via reactive (non-predictive) feedback.
    \changed{Linear systems $\dot y(t) = A\,y(t) + B\,u(t)$ satisfy the high-gain property, if the matrix~$B$ is sign-definite, cf.~\cite[Rem.~1.3]{berger2021funnel}}.
    Using the high-gain property, we define the class of systems under consideration.
    \begin{definition}[System class $\mathcal{S}$]
        A system~\eqref{eq:system} belongs to the system class~$\mathcal{S}$, if~$f$ satisfies the high-gain property (\Cref{def:highgainproperty}) and $f(0,0)=0$.
    \end{definition}
   
    We aim to design a prediction-based feedback controller \changed{to asymptotically stabilize system~\eqref{eq:system}, i.e., to force the system to $y(t) \to 0$ for $t \to \infty$.}
    Moreover, the output should evolve within prescribed margins, i.e., 
    \begin{equation} \label{eq:ControlObjective}
       \forall\, t\ge 0:\ \|y(t)\| \le \psi(t)
    \end{equation}
    for some positive function~$\psi$ given by the control engineer. 
    This predefined performance funnel can be chosen as 
    $\psi\in\mathcal{C}^1(\R_{\geq 0}; \R)$, with $\psi(t) > 0$ for all~$t \ge 0$, 
    or as $ \psi(\cdot) \equiv \infty $ if no restrictions are posed on the transient behavior of the system.

    \section{Control Methodology} \label{Sec:ControlMethodology}
    We introduce the control methodology to achieve the control objective~\eqref{eq:ControlObjective} for systems in~$\mathcal{S}$ by combining model-free funnel control with model-based optimization. 
    To this end, we first recall the specific version of funnel control at play, before proposing model predictive funnel control. 

    \subsection{Exact Funnel Control} \label{Sec:ExactFunnelControl}
    \changed{
    To utilize well established techniques 
    from MPC stability analysis, namely equilibrium endpoint constraints, we employ a feedback law that guarantees that the system reaches the equilibrium within the prediction horizon. To this end, we use} results from~\cite{lanza2024exact}, where a feedback controller has been proposed, which achieves exact tracking in predefined finite time.
    We briefly recap the controller design~\cite{lanza2024exact}. 
	For $c,T > 0$, define the \emph{funnel boundary} by
	\begin{equation} \label{eq:FunnelBoundaryExact}
		\varphi(t;c,T) := c (T-t), \quad t \in [0,T).
	\end{equation}
    Choose a \changed{bijection~$\alpha_c\in \mathcal{C}([0,1); [2c,\infty))$, and a surjection~$N\in\mathcal{C}(\R_{\ge 0}; \R)$}. 
    Feasible choices are, e.g., $\alpha_c(s) = 2c/(1-s)$ and $N(s) = s \cos{(s)}$.
    Typically, the control direction is known ($\pm$). In this case, the choice of~$N$ simplifies to~$N(s) = \pm s$, cf.~\cite[Rem.~1.8]{berger2021funnel}.
    With these functions and parameters, we recall~\cite[Thm.~3.1]{lanza2024exact}, adapted to our setting.
    \begin{lemma}\label[lemma]{Thm:ExactFunnelControl}
        For~$\varphi$ given by~\eqref{eq:FunnelBoundaryExact}, consider a system~\eqref{eq:system} contained in~$\mathcal{S}$ with $\|\hat{y}_0\| < \varphi(0) = cT$.
        Then, the application of the feedback law
        \begin{equation} \label{eq:ExactFunnelControl}
            u(t) =  (N \circ \alpha_c)\left(\frac{\|y(t)\|^2}{\varphi(t)^2}\right) \frac{y(t)}{\varphi(t)}
        \end{equation}
        to system~\eqref{eq:system} 
        yields \changed{an absolutely continuous maximal 
        closed-loop 
        solution $y\colon [0,T)\to\R^m$ satisfying} $\| y(t) \| < \varphi(t)$ for all~$t \in [0,T)$, $\lim_{t \to T} \|y(t)\| = 0$, and 
        $u \in L^\infty([0,T];\R^m)$.
        
    \end{lemma}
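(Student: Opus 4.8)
The statement is essentially a restatement of \cite[Thm.~3.1]{lanza2024exact} specialized to the system class $\mathcal{S}$ and to the particular funnel $\varphi(t) = c(T-t)$; the plan is therefore to verify that the hypotheses of that theorem are met under our standing assumptions and then invoke it. First I would note that a system in $\mathcal{S}$ has $f \in \mathcal{C}(\R^m\times\R^m;\R^m)$ satisfying the high-gain property (\Cref{def:highgainproperty}) and $f(0,0)=0$; the latter is needed so that the zero solution is an equilibrium and so that exact tracking of the reference $y_{\mathrm{ref}}\equiv 0$ is an admissible target. The funnel $\varphi(t;c,T) = c(T-t)$ is $\mathcal{C}^1$ on $[0,T)$, positive there, with $\varphi(T^-)=0$ and bounded derivative $\dot\varphi \equiv -c$, so it satisfies the structural requirements on the funnel boundary imposed in \cite{lanza2024exact} (strictly decreasing to zero in finite time $T$, with controlled decay rate). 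The initial condition constraint $\|\hat y_0\| < \varphi(0) = cT$ places the initial output strictly inside the funnel, which is exactly the compatibility condition required to start the construction.

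The heart of the argument — which I would not redo but only cite — is the standard funnel/high-gain analysis: one considers the scalar "distance-to-boundary" quantity, here conveniently packaged as $\|y(t)\|^2/\varphi(t)^2 \in [0,1)$, shows via a differential inequality that the closed loop has a maximal solution on some $[0,\omega)$, and then argues by contradiction that $\omega = T$ and that $\|y(t)\|/\varphi(t)$ stays bounded away from $1$ on compact subintervals: if the ratio approached $1$, the gain $(N\circ\alpha_c)(\cdot)$ would blow up (since $\alpha_c(s)\to\infty$ as $s\to 1$), and the high-gain property would force $\langle y, f(y,u)\rangle$ to become sufficiently negative to push the output back inside, contradicting the approach to the boundary. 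The boundedness $u\in L^\infty([0,T];\R^m)$ then follows because $\|y(t)\|/\varphi(t)$ is bounded away from $1$ uniformly (not just locally), which in turn, via $\alpha_c$ and continuity of $N$, bounds the gain; and $\lim_{t\to T}\|y(t)\| = 0$ follows from $\|y(t)\| < \varphi(t) = c(T-t) \to 0$.

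The main obstacle — or rather the point that needs care rather than genuine difficulty — is checking that our hypotheses line up exactly with those of \cite[Thm.~3.1]{lanza2024exact}: in particular, that the regularity and surjectivity assumptions we impose here ($\alpha_c\in\mathcal{C}([0,1);[2c,\infty))$ a bijection, $N\in\mathcal{C}(\R_{\ge 0};\R)$ a surjection) are precisely what that theorem requires for the gain function, and that tracking the trivial reference $0$ with the "exact" funnel is covered by their more general reference-tracking formulation. I would also remark that the conclusion as stated gives a \emph{maximal} solution on $[0,T)$ that cannot be extended (since $\varphi$ itself ceases to be defined at $T$), and that absolute continuity on compact subintervals of $[0,T)$ together with the uniform bound on $u$ and the essential boundedness of $f$ along the solution gives $u\in L^\infty([0,T];\R^m)$ as claimed; no new estimates beyond those in \cite{lanza2024exact} are needed.
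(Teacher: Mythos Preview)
Your proposal is correct and matches the paper's approach: the paper does not prove this lemma at all but simply introduces it as a recall of \cite[Thm.~3.1]{lanza2024exact} adapted to the present setting, adding only the one-line remark that the boundedness $u\in L^\infty([0,T);\R^m)$ stated there can be extended to the closed interval $[0,T]$. Your verification that the hypotheses of that theorem are met and your observation about extending $u$ to the closed interval are exactly in line with this.
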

    
    Note that, \cite[Thm.~3.1]{lanza2024exact} states $u \in L^\infty([0,T);\R^m)$ but the control can be extended
    to the closed interval. 

    \subsection{Model Predictive Funnel Control \changed{(MPFC)}} \label{Sec:MPFC}
    We now present model predictive funnel control, which combines the funnel controller of the previous section with model-based predictions to optimize a parameter-dependent funnel shape on a receding horizon. 

    For  symmetric positive semi-definite matrices $Q, R \in \R^{m\times m}$, define the \emph{stage costs} by 
	\begin{equation} \label{eq:stageCosts}
		\ell:\R^m\times\R^m \to \R_{\ge 0}, \ (y,u)\mapsto \langle y, Q y \rangle + \langle u , R u \rangle.
	\end{equation}
    In addition, choose a \emph{step size} $h>0$ and \emph{horizon length} $H\defeq nh$ with $n\in\N_{\geq 2}$\changed{. The sampling times are} $t_i = ih$, $i\in\N_0$. 
    \changed{We}
    define the funnel boundary by~\eqref{eq:FunnelBoundaryExact}
    and choose a surjection $N\in\mathcal{C}(\R_{\geq 0};\R)$  and a bijection
	$\alpha_c \in\mathcal{C}(\interval[open right]{0}{1}; \interval[open right]{2c}{\infty})$, depending on the parameter $c$. 
	These functions define the funnel feedback law~\eqref{eq:ExactFunnelControl} up to a specific funnel shape parameterization, which is defined by a choice from the following set: 
    For $t\geq 0$, $\hat{y}\in\R^m$, and an \textit{outer funnel function}~$\psi$, \changed{chosen by the control engineer or given by physical constraints, } define the \emph{feasible set}
	\begin{multline} \label{eq:feasibleset}
		\qquad
		\mathcal{F}_H(t,\hat{y}) \defeq 
        \Bigl\{ 
        (c,T) \in \R_{>0} \times (0,H] \bigm|
        \|\hat{y}\| < \varphi(0;c,T) ,\\
        \forall\,\tau\in[0,T)\colon \, \varphi(\tau;c,T) \le \psi(t+\tau) 
        \Bigr\} . 
        \qquad
	\end{multline}
    Consider a system \labelcref{eq:system} of class $\mathcal{S}$. For \changed{the decision variables} $(c,T)\in\mathcal{F}_H(t,\hat{y})$, we denote the \emph{cost function} by 
	\begin{subequations} \label{eq:valuefcn}
		\begin{align}
			 J_H(\hat{y},c,T) &\defeq \int_{0}^{H} \ell(y(\tau),u(\tau)) \,\mathrm{d}\tau +c \\
			 \text{s.t. } \dot{y}(\tau) \label{eq:systemfunction}
			&= f(y(\tau),u(\tau)), \quad y(0) = \hat{y}, \\
			\text{and } u(\tau) \label{eq:controllaw}
			&=  \begin{cases}
				 (N \circ \alpha_c)
				 \left( \left\| \frac{y(\tau)}{\varphi(\tau;c,T)}  \right\| ^2 \right)  \frac{y(\tau)}{\varphi(\tau;c,T)}, &  \tau<T \\
				 0,  &\text{else.}
			\end{cases} 
		\end{align}
	\end{subequations}
    \changed{For a unified notation in the case that the equilibrium is reached between two sampling steps, set $J_H(\hat{y},c,\vartheta)\defeq c$ for $ \vartheta \leq 0$ and any $c>0$. }
    Define the \emph{optimal value function} as 
    \changed{
	\[
		V_H(t, \hat{y})\defeq\inf_{(c,T)\in\mathcal{F}_H(t,\hat{y})} J_H(\hat{y},c,T).
	\]
    }
    With this, we define the control algorithm.

    \begin{algo}[Model Predictive Funnel Control]\label[algorithm]{alg:fmpc} 
    \begin{algorithmic} \ 
        \Require{
        	A system \labelcref{eq:system} of class $\mathcal{S}$,
            outer funnel boundary $\psi\in\mathcal{C}^1(\R_{\geq 0}; \R)$ with $\psi(t) > 0$ for all~$t \ge 0$, or $ \psi \equiv\infty $,
            stage cost $\ell$ as in~\eqref{eq:stageCosts}, 
            $h>0$,
            $H\defeq nh$ for $n\in\N_{\geq 2}$, 
            surjection $N\in\mathcal{C}(\R_{\geq 0};\R)$, 
            bijections $\alpha_c \in\mathcal{C}(\interval[open right]{0}{1}; \interval[open right]{2c}{\infty})$ for~$c>0$.}
        \State $i \gets 0 $
        \Loop
            \State At time $t_i$, measure the output~$y(t_i)$ of system~\eqref{eq:system} and set $\hat{y}_i = y(t_i) \in\R^m$. 
            \State \changed{Compute approximation $\hat{V}_H(t_i,\hat{y}_i)> V_H(t_i,\hat{y}_i)$.}
            \If{$i=0$} 
                \State Find $(c_0^*,T_0^*)\in\mathcal{F}_H(0,\hat{y}_0)$ such that
                \begin{align*}
                    J_H(\hat{y}_0,c_0^*,T_0^*) \leq \changed{\hat{V}_H(0,\hat{y}_0)}
                \end{align*}
            \Else 
                \State Find $(c_i^*,T_i^*)\in\mathcal{F}_H(t_i,\hat{y}_i)$ such that
                \begin{align}
                    \label{eq:fmpcstep2}
                    J_H(\hat{y}_i,c_i^*,T_i^*)\leq \min \bigl\{ J_H(\hat{y}_i,c_{i-1}^*,T_{i-1}^*-h), 
                    \changed{\hat{V}_H(t_i,\hat{y}_i)}
                    \bigr\}
                \end{align}
            \EndIf
            \State \vspace{3pt}On the interval $[t_i,t_i+h)$ apply~\labelcref{eq:controllaw}
            with $\alpha_{c_i^*}$ and $\varphi(t-t_i;c_i^*,T_i^*)$ to system~\eqref{eq:system}  
            \State $ i \gets i+1 $
        \EndLoop
    \end{algorithmic}
	\end{algo}
    \changed{
    Note that calculating the infimum~$V_H(t_i,\hat y_i)$ is not required in the algorithm (which might be intractable), but an optimization procedure will output an approximation~$\hat V_H(t_i,\hat y_i)$ of~$V_H(t_i,\hat y_i)$ in each iteration.
    }
	
\section{Main Results} \label{sec:MainResults}
    We present the two main results.
    In \Cref{lemma:FMPCAlgRecFeasible}, we prove that \Cref{alg:fmpc}
    is initially and recursively feasible for all 
    initial values $\hat{y}_0$ with $\|\hat{y}_0\|<\psi(0)$.
    Utilizing well-known results of stability via equilibrium terminal 
    constraints, cf.~\cite{keerthi1988optimal},
    \Cref{thm:TerminalConstraintsConvergence} shows that the closed-loop
    application of the algorithm is stabilizing.
    
	\begin{theorem} \label[theorem]{lemma:FMPCAlgRecFeasible}
        Let~$\psi \in \mathcal{C}^1(\R_{\ge 0};\R_{> 0})$ with $\psi(t) > 0$ for all~$t \ge 0$, or~$\psi=\infty$, be given.
        Let $y(0)$ in~\eqref{eq:system} \changed{satisfy} $\|y(0)\| < \psi(0)$. 
        Then, \Cref{alg:fmpc} is initially feasible, i.e., there exist $(c_0^*,T_0^*) \in \mathcal{F}_H(0,y(0))$ such that $J_H(y(0),c_0^*,T_0^*) \le \changed{\hat{V}_H(0,y(0))}$.
        Moreover, \Cref{alg:fmpc} is recursively feasible, meaning the solvability
        of the problem in \labelcref{eq:fmpcstep2} at time $t_i,
        i\in\N_{0}$ implies its solvability at the next time step $t_{i+1}$.
        Additionally, every maximal solution $y_i:[t_i,t_{i+1}]\to\R^m$ 
        of the initial value problem~\eqref{eq:system} with initial value $y_i(t_i)=y_{i-1}(t_i)$
        on the interval $[t_i,t_{i+1}]$ stays inside the funnel boundaries, meaning, for all~$i \in \N$ and $t \in [t_i,t_{i+1}]$, we have
		\begin{equation} \label{eq:lemma:FMPCAlgRecFeasible:outputsmallerthanfunnel} 
		    \|y_i(t)\| \le \max\left\{0, \varphi(t-t_i;c_i^*,T_i^*)\right\}
            \le  \psi(t).
		\end{equation} 
	\end{theorem}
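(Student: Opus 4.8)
\emph{Proof proposal.}

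\textbf{Initial feasibility.}
The plan is to first show $\mathcal{F}_H(0,y(0))\neq\emptyset$. If $\psi\equiv\infty$, take $T=H$ and any $c>\|y(0)\|/H$. Otherwise, using $\psi(0)-\|y(0)\|>0$ and continuity of $\psi$, I would pick $\delta\in(0,H]$ with $\psi(\tau)\ge\|y(0)\|+\tfrac12(\psi(0)-\|y(0)\|)$ on $[0,\delta]$ and set $T\defeq\delta$, $c\defeq(\|y(0)\|+\tfrac14(\psi(0)-\|y(0)\|))/\delta$, so that $\varphi(0;c,T)=cT>\|y(0)\|$ and $\varphi(\tau;c,T)\le cT\le\psi(\tau)$ on $[0,T)$, i.e.\ $(c,T)\in\mathcal{F}_H(0,y(0))$. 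Hence $V_H(0,y(0))=\inf_{\mathcal{F}_H(0,y(0))}J_H(y(0),\cdot,\cdot)<\infty$; since $\hat V_H(0,y(0))>V_H(0,y(0))$, the number $\hat V_H(0,y(0))$ is not a lower bound for the set of admissible costs, so there is $(c_0^*,T_0^*)\in\mathcal{F}_H(0,y(0))$ with $J_H(y(0),c_0^*,T_0^*)\le\hat V_H(0,y(0))$.

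\textbf{Recursive feasibility.}
Assume \labelcref{eq:fmpcstep2} solvable at $t_i$, so there is $(c_i^*,T_i^*)\in\mathcal{F}_H(t_i,\hat y_i)$ with $\|\hat y_i\|<\varphi(0;c_i^*,T_i^*)$. Apply \Cref{Thm:ExactFunnelControl} (with time shifted by $t_i$) to the feedback \labelcref{eq:controllaw} with parameters $(c_i^*,T_i^*)$ on $[t_i,t_i+\min\{T_i^*,h\})$; this gives $\|y_i(t)\|<\varphi(t-t_i;c_i^*,T_i^*)$ there and $\|y_i(t)\|\to0$ as $t-t_i\to T_i^*$. If $T_i^*>h$, then $\hat y_{i+1}=y_i(t_{i+1})$ obeys $\|\hat y_{i+1}\|<\varphi(h;c_i^*,T_i^*)=\varphi(0;c_i^*,T_i^*-h)$, and the affine shift identity $\varphi(\tau;c_i^*,T_i^*-h)=\varphi(h+\tau;c_i^*,T_i^*)\le\psi(t_{i+1}+\tau)$ for $\tau\in[0,T_i^*-h)$ (by $(c_i^*,T_i^*)\in\mathcal{F}_H(t_i,\hat y_i)$), together with $T_i^*-h\in(0,H]$, yields $(c_i^*,T_i^*-h)\in\mathcal{F}_H(t_{i+1},\hat y_{i+1})$; moreover $J_H(\hat y_{i+1},c_i^*,T_i^*-h)<\infty$ since the associated trajectory is bounded with bounded input on $[0,T_i^*-h)$ by \Cref{Thm:ExactFunnelControl} and is continued by $y\equiv0$, $u\equiv0$ afterwards (using $f(0,0)=0$). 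If $T_i^*\le h$, then $\hat y_{i+1}=0$ (the trajectory reaches the origin at $t_i+T_i^*$ and remains there since $f(0,0)=0$); for $\hat y=0$ the closed loop produces $y\equiv u\equiv0$, so $J_H(0,c,T)=c$, and $\psi$ being bounded below by a positive constant on $[t_{i+1},t_{i+1}+H]$ (trivial if $\psi\equiv\infty$) gives $V_H(t_{i+1},0)=0$. In both cases the right-hand side of \labelcref{eq:fmpcstep2} at $t_{i+1}$ strictly exceeds $V_H(t_{i+1},\hat y_{i+1})$ — in the first case because of the admissible candidate $(c_i^*,T_i^*-h)$ and $\hat V_H>V_H$, in the second because, with the convention $J_H(\hat y,c_i^*,T_i^*-h)=c_i^*$ for $T_i^*-h\le0$, it equals $\min\{c_i^*,\hat V_H(t_{i+1},0)\}>0=V_H(t_{i+1},0)$. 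Since $V_H$ is the infimum of admissible costs, a feasible pair with cost below that right-hand side exists, i.e.\ \labelcref{eq:fmpcstep2} is solvable at $t_{i+1}$.

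\textbf{Funnel inclusion.}
Fix $i\in\N$ and $t\in[t_i,t_{i+1}]$. If $t-t_i<T_i^*$, then \Cref{Thm:ExactFunnelControl} gives $\|y_i(t)\|<\varphi(t-t_i;c_i^*,T_i^*)=\max\{0,\varphi(t-t_i;c_i^*,T_i^*)\}$, and $(c_i^*,T_i^*)\in\mathcal{F}_H(t_i,\hat y_i)$ yields $\varphi(t-t_i;c_i^*,T_i^*)\le\psi(t_i+(t-t_i))=\psi(t)$. If $t-t_i\ge T_i^*$, then $\varphi(t-t_i;c_i^*,T_i^*)=c_i^*(T_i^*-(t-t_i))\le0$ and the trajectory has reached and remains at the origin, so $\|y_i(t)\|=0=\max\{0,\varphi(t-t_i;c_i^*,T_i^*)\}\le\psi(t)$ since $\psi>0$ (or $\psi\equiv\infty$). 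This is \labelcref{eq:lemma:FMPCAlgRecFeasible:outputsmallerthanfunnel}.

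\textbf{Main obstacle.}
The recursive step is the crux: one has to transport the exact-tracking conclusion of \Cref{Thm:ExactFunnelControl} across the sampling grid via the affine shift $\varphi(\tau;c,T-h)=\varphi(h+\tau;c,T)$, verify that the shifted pair remains feasible — in particular that the still-strict inequality $\|\hat y_{i+1}\|<\varphi(0;c_i^*,T_i^*-h)$ persists because the prediction has not yet left the funnel — and handle the corner case $T_i^*\le h$, where the convention $J_H(\hat y,c,\vartheta)=c$ for $\vartheta\le0$ and the identity $V_H(t,0)=0$ are indispensable. A secondary subtlety, since $f$ is merely continuous, is that Carathéodory solutions need not be unique once the input switches to $0$ near the origin; one must argue that the continuation relevant to the algorithm is $y\equiv0$ (consistent with $f(0,0)=0$), so that $J_H$ is finite and \labelcref{eq:lemma:FMPCAlgRecFeasible:outputsmallerthanfunnel} indeed holds for the generated trajectory.
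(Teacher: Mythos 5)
Your proposal is correct and takes essentially the same route as the paper's proof: nonemptiness of the feasible set, finiteness of the cost via \Cref{Thm:ExactFunnelControl} together with the zero continuation using $f(0,0)=0$, the property $\hat{V}_H>V_H$ for initial feasibility, the shifted candidate $(c_i^*,T_i^*-h)$ (resp.\ a small-$c$ pair when $T_i^*\le h$, using the convention $J_H(\hat y,c,\vartheta)=c$ for $\vartheta\le 0$) for recursive feasibility, and the funnel inclusion from the same lemma. The only cosmetic deviation is your Step-1 construction of a feasible pair, which uses continuity of $\psi$ near the current time instead of the paper's bound involving $\|\dot\psi\|_\infty$; both are valid.
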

    \medskip
 	\begin{proof}
        \emph{Step 1:} 
        Given $(\hat{t},\hat{y})\in\R_{\geq0}\times\R^m$ with $\|\hat{y}\|<\psi\changed{(\hat{t}\,)}$,
        we show that $\changed{\mathcal{F}_H(\hat{t},\hat{y})}\neq\emptyset$. 
        To see this, consider the following two cases.
        If $\psi\equiv\infty$, then $\left( \tfrac{\|\hat{y}\|+1}{H}, H \right) \in \changed{\mathcal{F}_H(\hat{t},\hat{y})}$. 
        Otherwise, invoking~$\psi \in \mathcal{C}^1(\R_{\ge 0};\R)$ with~$\psi(t) > 0$ for all~$t \ge 0$,
        we see that the candidates
        \begin{align*}
            \hat T &\defeq \min\left\{ \frac{\psi\changed{(\hat{t}\,)}+\|\hat{y}\|}{2 \| \dot\psi|_{\interval{\hat{t}}{\hat{t}+H}} \|_\infty } ,\; H \right\},  &
            \hat c &\defeq \frac{\psi\changed{(\hat{t}\,)}+\|\hat{y}\|}{2 \hat T } 
        \end{align*}
        fulfill $(\hat c,\hat T)\in\mathcal{F}_{H}(\hat{t},\hat{y})\neq\emptyset$ 
        by checking the set predicates. 
        $\hat{T}\in \interval[open left]{0}{H}$ and $\hat{c}>0$ follow directly. Furthermore, 
        \begin{align*}
            \| \hat{y} \| 
            < \varphi(0;\hat{c},\hat{T}) 
            = \hat{c}\hat{T} 
            = \tfrac{1}{2} (\psi\changed{(\hat{t}\,)}+\|\hat{y}\|) < \psi\changed{(\hat{t}\,)} ,
        \end{align*}
        where we used $\|\hat y\| < \psi\changed{(\hat{t}\,)}$.
        Since, for  $\tau \in \interval[open right]{0}{\hat{T}}$,
        $\dd{\tau} \varphi(\tau;\hat{c},\hat{T}) 
        = -\hat{c} 
        \leq -\|\dot\psi|_{\interval{\hat{t}}{\hat{t}+H}} \|_\infty$, we calculate 
        \begin{equation*}
           \varphi(\tau; \hat{c}, \hat{T}) 
           = \varphi(0; \hat{c}, \hat{T}) + \int_0^\tau \dd{s} \varphi(s;\hat{c},\hat{T}) \,ds 
           < 
           \psi\changed{(\hat{t}\,)} + \int_{\hat t}^{\hat t +\tau} \dot\psi(s) \,ds = \psi(\hat t+\tau).
        \end{equation*}
        
        \emph{Step 2:} 
        Given $(t_i,\hat{y}_i)\in\R_{\geq 0}\times \R^m$ with $\|\hat{y}_i\|<\psi(t_i)$ for $i\in\N_{0}$,
        we show $V_H(t_i,\hat{y}_i)\in\R$. 
        According to \emph{Step~1}, there exist $c,T \in \R$ such that $\varphi(t-t_i;c,T) < \psi(t)$ for all~$t \in \changed{\interval[open right]{t_i}{t_i+T}}$ \changed{and} $\|\hat{y}_i\| < \varphi(0;c,T)$.
        Then, \Cref{Thm:ExactFunnelControl} yields an input signal~$u_i \in L^\infty([t_i,t_{i}+T];\R^m)$ such that, 
        for $(c,T) \in \mathcal{F}_H(t_i,\hat{y}_i)$, the initial value problem~\eqref{eq:system} 
        with initial value $y(t_i)=\hat{y}_i$ has a solution $\tilde{y}_i:[t_i,t_i+T]\to\R^m$ with 
        $\lim_{t\to T} \|\tilde{y}_i(t_i+t)\| = 0$ and 
        \begin{equation}
            \forall\, t \in [t_i,t_i+T) \colon  
            \|\tilde{y}_i(t)\| < \varphi(t-t_i;c,T) \leq \psi(t).\label{eq:proof:FMPCAlgRecFeasible:outputsmallerthanfunnel}
        \end{equation}
        
        On $[t_i+T,t_i+H]$, the solution $\tilde y_{i}$ can be continued with $u=0$.
        Since $f(0,0)=0$ by assumption, we get $\tilde{y}_i|_{[t_i+T,t_i+H]} \equiv 0$.
        Then, $\tilde{y}_i,u_i \in L^\infty([t_i,t_i+H];\R^m)$ implies that $J_H(\hat{y}_i,c,T)$ in~\eqref{eq:valuefcn} is finite.
        Thus, $V_H(t_i,\hat{y}_i)$ is finite.
        \changed{Note that $\|\tilde{y}_i(t)\|<\psi(t)$ holds in particular at the subsequent MPC sampling time $t_{i+1} = t_i + h$}. 
        
        \emph{Step 3:} We show initial and recursive feasibility of \Cref{alg:fmpc}.
        
        \emph{Step 3.1:} 
        According to \emph{Step~2}, $V_H(0,y(0))$ is finite. 
        Thus, there exist $(c_0^*,T_0^*) \in \mathcal{F}_H(0,y(0))$ 
        such that $J_H(y(0),c_0^*,T_0^*) \le \changed{\hat{V}_H(0,y(0))}.$
        This is, \Cref{alg:fmpc} is initially feasible.
        
        \emph{Step 3.2:} 
        We show recursive feasibility.
        For~$i \in \N_0$ let $y_{i}:[t_{i},t_{i+1}] \to \R^m$ be the solution of the 
        initial value problem \eqref{eq:system} with initial value $y_i(t_i)=y_{i-1}(t_i)$
        and control~$u_i$.
        Further, let $(c_i^*,T_i^*)\in\mathcal{F}_H(t_i,y_{i}(t_i))$ be the associated funnel parameters.
        Define $\hat{y}_{i+1}:=y_i(t_{i+1})$.
        We have to show that there exist $(c_{i+1}^*,T_{i+1}^*)\in\mathcal{F}_H(t_{i+1},\hat{y}_{i+1})$ satisfying \eqref{eq:fmpcstep2}.
        \changed{
        If $T_i^* \leq h$, then, by \emph{Step 2}, the system has already reached the equilibrium at time $t_{i+1}$. 
        In this case, $V_H(t_{i+1},\hat{y}_{i+1}) = 0$ and $c^*_{i+1}$ can be chosen small enough such that $(c^*_{i+1},H) \in \mathcal{F}_H(t_{i+1},\hat{y}_{i+1})$ (similar to \emph{Step~1}) and satisfies \labelcref{eq:fmpcstep2}. 
        Otherwise, if $T_i^* > h$, then the candidates $(c_{i+1}^*,T_{i+1}^*) \defeq (c_i^*, T_i^*-h)$ fulfill $(c_{i+1}^*,T_{i+1}^*)\in\mathcal{F}_H(t_{i+1},\hat{y}_{i+1})$, which can be seen by checking the set predicates of $\mathcal{F}_H(t_{i+1},\hat{y}_{i+1})$ in combination with \emph{Step 2}.
        Also, they satisfy $J_H(\hat{y}_{i+1},c_{i+1}^*,T_{i+1}^*) \leq J_H(\hat{y}_{i+1},c_{i}^*,T_{i}^*-h)$ by definition. 
        This, together with the non-emptiness of $\mathcal{F}_H(t_{i+1},\hat{y}_{i+1})$, shows existence of a parameter pair satisfying \labelcref{eq:fmpcstep2}.
        }
        
         \emph{Step 4:}
         As a direct consequence of \eqref{eq:proof:FMPCAlgRecFeasible:outputsmallerthanfunnel} in \emph{Step~2}, 
         every solution $y_i$ satisfies \eqref{eq:lemma:FMPCAlgRecFeasible:outputsmallerthanfunnel}.
         This completes the proof.
	\end{proof}

    Next, we prove boundedness of the closed-loop costs. 
	\begin{lemma} \label[lemma]{thm:boundedCosts}
        Let the assumptions of \Cref{lemma:FMPCAlgRecFeasible} be fulfilled. 
		Then, the input signals $u_i(\cdot)$ and the output signals $y_i(\cdot)$ resulting from \Cref{alg:fmpc} fulfill
		\begin{align*}
			J_\infty^{cl}(H,y_0(0)) &\defeq 
			     \sum_{i=0}^{\infty} \int_{t_i}^{t_{i+1}}  \ell(y_i(t),u_i(t)) dt \\
			& \leq \changed{\hat{V}_H(0,y_0(0))} < \infty.
		\end{align*}
	\end{lemma}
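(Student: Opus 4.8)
The plan is to run the classical equilibrium-terminal-constraint argument of \cite{keerthi1988optimal}, using $V_i \defeq J_H(\hat y_i, c_i^*, T_i^*)$ as a Lyapunov-type function along the closed loop. First I would record finiteness: \emph{Step~2} of the proof of \Cref{lemma:FMPCAlgRecFeasible} shows $V_H(0,y_0(0)) < \infty$, and since the algorithm only requires $\hat V_H(0,y_0(0)) > V_H(0,y_0(0))$, this approximation may be taken finite; then initial feasibility gives $V_0 = J_H(y_0(0),c_0^*,T_0^*) \le \hat V_H(0,y_0(0)) < \infty$.

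The heart of the proof is the cost-decrease identity
\[ J_H(\hat y_{i+1},c_i^*,T_i^*-h) \;=\; V_i - \int_{t_i}^{t_{i+1}}\ell(y_i(t),u_i(t))\,\mathrm dt , \qquad i \in \N_0 , \]
which I would establish by comparing trajectories. Let $y_i^{\mathrm{ext}}\colon[0,H]\to\R^m$ be the prediction trajectory of \eqref{eq:systemfunction}--\eqref{eq:controllaw} generated by $(c_i^*,T_i^*)$ from $\hat y_i$, continued by $u\equiv 0$ after $T_i^*$; by \Cref{Thm:ExactFunnelControl} and $f(0,0)=0$ it is identically $0$ on $[T_i^*,H]$ (where $\ell$ vanishes), and on $[0,h]$ it coincides, in local time, with the applied pair $(y_i,u_i)$, so $\int_{t_i}^{t_{i+1}}\ell(y_i,u_i)\,\mathrm dt = \int_0^h\ell(y_i^{\mathrm{ext}},u_i^{\mathrm{ext}})\,\mathrm ds$. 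The structural fact I would exploit is the shift identity $\varphi(\tau;c_i^*,T_i^*-h) = c_i^*(T_i^*-h-\tau) = \varphi(\tau+h;c_i^*,T_i^*)$: it forces the funnel feedback \eqref{eq:controllaw} with parameters $(c_i^*,T_i^*-h)$ started at $\hat y_{i+1} = y_i^{\mathrm{ext}}(h)$ to reproduce exactly the time-$h$ shift of $(y_i^{\mathrm{ext}},u_i^{\mathrm{ext}})$. Substituting $s = \tau+h$ in the cost integral and using that the tail $[H,H+h]$ of the shifted trajectory is identically zero (because $T_i^* \le H$) yields the identity, with the two $+c_i^*$ terms agreeing. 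In the degenerate case $T_i^* \le h$, \emph{Step~2} gives $\hat y_{i+1} = 0$, the convention $J_H(\hat y,c,\vartheta) \defeq c$ for $\vartheta \le 0$ gives $J_H(\hat y_{i+1},c_i^*,T_i^*-h) = c_i^*$, and $y_i^{\mathrm{ext}}\equiv 0$ on $[T_i^*,H]\supseteq[h,H]$ gives $V_i = \int_0^h\ell(y_i^{\mathrm{ext}},u_i^{\mathrm{ext}})\,\mathrm ds + c_i^* = \int_{t_i}^{t_{i+1}}\ell(y_i,u_i)\,\mathrm dt + c_i^*$, so the identity holds there too.

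Next I would combine the identity with the selection rule \eqref{eq:fmpcstep2}, namely $V_{i+1} = J_H(\hat y_{i+1},c_{i+1}^*,T_{i+1}^*) \le J_H(\hat y_{i+1},c_i^*,T_i^*-h)$, to obtain $V_{i+1} \le V_i - \int_{t_i}^{t_{i+1}}\ell(y_i,u_i)\,\mathrm dt$ for every $i$. Since $\ell \ge 0$ and $c_j^* > 0$ force $V_j \ge 0$, telescoping over $i = 0,\dots,N-1$ gives $\sum_{i=0}^{N-1}\int_{t_i}^{t_{i+1}}\ell(y_i,u_i)\,\mathrm dt \le V_0 - V_N \le V_0 \le \hat V_H(0,y_0(0))$. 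The partial sums are nondecreasing and bounded above, so letting $N\to\infty$ yields $J_\infty^{cl}(H,y_0(0)) \le \hat V_H(0,y_0(0)) < \infty$.

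I expect the only genuine difficulty to lie in the bookkeeping behind the cost-decrease identity: verifying that the $u\equiv 0$ tail extension contributes no stage cost, treating $T_i^* \le h$ uniformly via the stated convention, and -- if Carathéodory solutions to \eqref{eq:system} are not unique -- fixing the candidate trajectory at step $i+1$ as precisely the time shift of $y_i^{\mathrm{ext}}$, so that one gets the identity and not a useless inequality. The telescoping and the sign bound $V_i \ge 0$ are routine.
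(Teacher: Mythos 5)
Your proposal is correct and follows essentially the same route as the paper: the shift identity $\varphi(\tau;c_i^*,T_i^*-h)=\varphi(\tau+h;c_i^*,T_i^*)$ together with the cost-free zero-tail extension yields exactly the paper's identification of the leftover prediction cost with $J_{H-h}(y_{i+1}(t_{i+1}),c_i^*,T_i^*-h)=J_H(y_{i+1}(t_{i+1}),c_i^*,T_i^*-h)$, after which the selection rule \eqref{eq:fmpcstep2} and telescoping give the bound by $\hat{V}_H(0,y_0(0))$. Your extra care with the degenerate case $T_i^*\le h$ and with fixing the candidate trajectory as the time shift of the prediction only makes explicit what the paper leaves implicit.
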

	\begin{proof}
        Continuing the considerations of the proof of \Cref{lemma:FMPCAlgRecFeasible}, it follows that 
        for all $\hat{t}\geq 0$ and $\hat{y}\in\R^m$ we have 
        $\mathcal{F}_{H-h}(\hat{t},\hat{y})\subseteq\mathcal{F}_{H}(\hat{t},\hat{y})$ \changed{by inspecting the properties of the set in~\eqref{eq:feasibleset}}.
        Moreover, the horizon can be extended with zero cost:
        \begin{equation*}
           \forall\, (c,T)\in \mathcal{F}_{H-h}(\hat{t},\hat{y}):\ J_{H-h}(\hat{y},c,T) = J_H(\hat{y},c,T).
        \end{equation*}
        We calculate, for $i\in\N$: 
        \begin{align} 
                & J_H(y_i(t_i),c_i^*,T_i^*) 
                 =\int_{t_i}^{t_i+H} \ell(y_i(t),u_i(t)) \;dt +c^*_i \label{eq:boundedCosts:descent} \\
                & =\int_{t_i}^{t_{i+1}} \ell(y_i(t),u_i(t)) \;dt + \int_{t_{i+1}}^{t_{i}+H} \ell(y_i(t),u_i(t)) \;dt +c^*_i \notag\\
                & \stackrel{\labelcref{eq:valuefcn}}{=}\int_{t_i}^{t_{i+1}} \ell(y_i(t),u_i(t)) \;dt + J_{H-h}(y_{i+1}(t_{i+1}),c_i^*,T_i^*-h) \notag\\
                & =\int_{t_i}^{t_{i+1}} \ell(y_i(t),u_i(t)) \;dt + J_{H}(y_{i+1}(t_{i+1}),c_i^*,T_i^*-h) \notag\\
                & \stackrel{\labelcref{eq:fmpcstep2}}{\geq}\int_{t_i}^{t_{i+1}} \ell(y_i(t),u_i(t)) \;dt + J_{H}(y_{i+1}(t_{i+1}),c_{i+1}^*,T_{i+1}^*). \notag  
        \end{align}
        Rearranging gives 
        \begin{equation*}
            \int_{t_i}^{t_{i+1}} \ell(y_i(t),u_i(t)) dt 
            \leq J_H(y_i(t_i),c_i^*,T_i^*)  - J_{H}(y_{i+1}(t_{i+1}),c_{i+1}^*,T_{i+1}^*).
        \end{equation*}
        As this holds for all $i\in\N_0$, summing up to $K\in\N$ gives 
        \begin{subequations}
        \begin{align*}
            \sum_{i=0}^{K}&\int_{t_i}^{t_{i+1}} \ell(y_i(t),u_i(t)) dt \\
            &\leq J_H(y_0(0),c_0^*,T_0^*) - J_H(y_{K+1}(t_{K+1}),c_{K+1}^*,T_{K+1}^*) \\
            &\leq J_H(y_0(0),c_0^*,T_0^*) 
            \leq \changed{ \hat{V}_H(0,y_0(0)).}
        \end{align*}
        \end{subequations}
        Since the costs are nonnegative, the sum monotonically increases and is bounded for~$K\to\infty$. 
        So, it converges.
	\end{proof}
    We prove boundedness of the optimized parameters. 
	\begin{corollary} \label[corollary]{cor:parametersBounded}
        Let the assumptions of \Cref{lemma:FMPCAlgRecFeasible} be fulfilled. 
        Then, the set 
		$\left\{c_i^* \mid i\in\N_0 \right\} \subset \R$
		resulting from \Cref{alg:fmpc} is bounded. 
	\end{corollary}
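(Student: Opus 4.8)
The plan is to exploit two facts that are essentially already contained in the preceding proofs: the cost $J_H$ contains the summand $c$ additively, and the sequence of optimal costs along the closed loop is non-increasing. First I would note that, since the stage cost $\ell$ from~\eqref{eq:stageCosts} is nonnegative, the definition~\eqref{eq:valuefcn} of the cost function gives, for every $i \in \N_0$,
\begin{equation*}
  0 < c_i^* \le \int_{t_i}^{t_i+H} \ell(y_i(t),u_i(t))\,dt + c_i^* = J_H(y_i(t_i),c_i^*,T_i^*),
\end{equation*}
where the first inequality is strict because $(c_i^*,T_i^*) \in \mathcal{F}_H(t_i,y_i(t_i))$ forces $c_i^* > 0$; in the degenerate case $T_i^* \le 0$ the last identity still holds by the convention $J_H(\hat y,c,\vartheta) \defeq c$. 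Hence it suffices to bound the real sequence $\bigl(J_H(y_i(t_i),c_i^*,T_i^*)\bigr)_{i \in \N_0}$ from above.

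Second, I would reuse the chain of (in)equalities~\eqref{eq:boundedCosts:descent} derived in the proof of \Cref{thm:boundedCosts}. Dropping the nonnegative integral term $\int_{t_i}^{t_{i+1}} \ell(y_i(t),u_i(t))\,dt$ there yields the monotonicity estimate
\begin{equation*}
  J_H(y_{i+1}(t_{i+1}),c_{i+1}^*,T_{i+1}^*) \le J_H(y_i(t_i),c_i^*,T_i^*), \qquad i \in \N_0,
\end{equation*}
so the sequence from the first step is non-increasing. Together with initial feasibility (Step~3.1 of \Cref{lemma:FMPCAlgRecFeasible}), which gives $J_H(y_0(0),c_0^*,T_0^*) \le \hat V_H(0,y_0(0)) < \infty$, we obtain
\begin{equation*}
  0 < c_i^* \le J_H(y_i(t_i),c_i^*,T_i^*) \le J_H(y_0(0),c_0^*,T_0^*) \le \hat V_H(0,y_0(0)) \qquad \text{for all } i \in \N_0,
\end{equation*}
which shows that $\{c_i^* \mid i \in \N_0\}$ is bounded.

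No genuinely hard step is expected; the statement is a direct corollary of the structure of $J_H$ and of the descent already established for \Cref{thm:boundedCosts}. The only point requiring a little care is the bookkeeping at sampling instants where the equilibrium has already been reached, so that $T_i^* \le h$ and possibly a fresh $T_{i+1}^* = H$ is chosen: there one has to invoke the convention $J_H(\hat y,c,\vartheta) = c$ for nonpositive $\vartheta$ together with the horizon-extension identity $J_{H-h} = J_H$ and the $\min$ in~\eqref{eq:fmpcstep2}, exactly as in the recursive-feasibility case distinction in \Cref{lemma:FMPCAlgRecFeasible}, to see that the monotonicity estimate above persists in this situation as well.
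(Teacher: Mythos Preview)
Your proposal is correct and follows essentially the same route as the paper's proof: use the additive structure of~\eqref{eq:valuefcn} to get $c_i^* \le J_H(y_i(t_i),c_i^*,T_i^*)$, then apply the descent~\eqref{eq:boundedCosts:descent} to bound everything by $J_H(y_0(0),c_0^*,T_0^*)$. The paper's version is terser and omits the degenerate-case bookkeeping you spell out, but the argument is the same.
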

	\begin{proof}
        According to \eqref{eq:valuefcn} and \eqref{eq:boundedCosts:descent} for all~ $i\in\N$ it holds
    	\begin{equation*}
    	    c_i^* \leq  J_H(y_i(t_i),c_i^*,T_i^*) \leq J_H(y_{i-1}(t_{i-1}),c_{i-1}^*,T_{i-1}^*).
    	\end{equation*}
        Thus, $c_i^*\leq J_H(y_0(0),c_0^*,T_0^*)$ for all $i\in\N$.
	\end{proof}
    
    Next, we show that the boundedness of the closed-loop costs implies convergence of the closed-loop solution, if~$Q$ is positive definite.
    We thereby prove that \Cref{alg:fmpc} fulfills the control objective~\eqref{eq:ControlObjective} in \Cref{sec:systemClassAndControlObjective}.
	In the context of \Cref{alg:fmpc}, define the \emph{closed-loop solution} as 
	\begin{subequations}			
		\begin{align*}
			y^{cl}(t) & \defeq \sum_{i=1}^\infty \chi_{[t_i,t_{i+1})}(t)\, y_i(t), \\
			u^{cl}(t) & \defeq \sum_{i=1}^\infty \chi_{[t_i,t_{i+1})}(t)\, u_i(t),
		\end{align*}
	\end{subequations}
	where $\chi_{[t_i,t_{i+1})}$ is the indicator function on~$[t_i,t_{i+1})$. 
    
	\begin{theorem} \label[theorem]{thm:TerminalConstraintsConvergence}
        Let the assumptions of \Cref{lemma:FMPCAlgRecFeasible} be fulfilled. 
        In addition, assume $Q$ to be positive definite. 
        Then, \Cref{alg:fmpc} asymptotically stabilizes system \labelcref{eq:system}, i.e., it holds that
        $\lim_{t\to \infty} y^{cl}(t) =0$.
	\end{theorem}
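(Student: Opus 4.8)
The plan is to convert the summability of the closed-loop stage costs established in \Cref{thm:boundedCosts} into pointwise decay of the output via a Barbalat-type argument, exploiting that $Q$ is now positive definite.

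First I would extract an $L^2$-bound on the output. Since $Q\succ 0$, one has $\ell(y,u)\ge\lambda_{\min}(Q)\|y\|^2$ for all $(y,u)$, so \Cref{thm:boundedCosts} yields
\[
  \lambda_{\min}(Q)\int_{0}^{\infty}\|y^{cl}(t)\|^2\,\mathrm{d}t
  \le \sum_{i=0}^{\infty}\int_{t_i}^{t_{i+1}}\ell(y_i(t),u_i(t))\,\mathrm{d}t
  \le \hat{V}_H(0,y_0(0)) <\infty ,
\]
hence $t\mapsto\|y^{cl}(t)\|^2$ is integrable on $[0,\infty)$.

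Second, I would establish uniform boundedness of $y^{cl}$, $u^{cl}$, and $\dot y^{cl}$. Since $y_i(t_i)=y_{i-1}(t_i)$ (cf.\ \Cref{lemma:FMPCAlgRecFeasible}), $y^{cl}$ is continuous on $[0,\infty)$ and, being a concatenation of Carathéodory solutions of \eqref{eq:system}, it is locally absolutely continuous with $\dot y^{cl}(t)=f(y^{cl}(t),u^{cl}(t))$ for a.e.\ $t$. From \eqref{eq:lemma:FMPCAlgRecFeasible:outputsmallerthanfunnel} we get $\|y_i(t)\|\le\varphi(0;c_i^*,T_i^*)=c_i^*T_i^*\le c_i^*H$, and \Cref{cor:parametersBounded} provides $\bar c\defeq\sup_{i}c_i^*<\infty$; thus $\|y^{cl}\|_\infty\le\bar cH=:M$, i.e.\ $y^{cl}$ takes values in the fixed compact set $K\defeq\{z\in\R^m:\|z\|\le M\}$. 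For the input, I would revisit the proof of \Cref{Thm:ExactFunnelControl} (i.e.\ \cite[Thm.~3.1]{lanza2024exact}): on each $[t_i,t_{i+1})$ the feedback \eqref{eq:controllaw} obeys $\|u_i(t)\|\le\sup\{|N(r)| : 2c_i^*\le r\le\alpha_{c_i^*}(\omega_i)\}$ with $\omega_i\defeq\sup_{\tau\in[0,T_i^*)}\bigl\|y_i(t_i+\tau)/\varphi(\tau;c_i^*,T_i^*)\bigr\|^2<1$; invoking $c_i^*\le\bar c$, a uniform-in-$i$ bound $\omega\defeq\sup_i\omega_i<1$ (which the funnel argument provides since all trajectories stay in the fixed compact set $K$ and all parameters in the fixed compact range $(0,\bar c]\times(0,H]$), and continuity of $N$ and of $c\mapsto\alpha_c$, one arrives at $\|u^{cl}\|_\infty\le\bar u<\infty$. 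Hence $\|\dot y^{cl}\|_\infty\le L\defeq\max\{\|f(z,v)\| : \|z\|\le M,\ \|v\|\le\bar u\}<\infty$.

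Finally, I would apply Barbalat's lemma to $g(t)\defeq\|y^{cl}(t)\|^2$. By the previous step $g$ is locally absolutely continuous with $|\dot g(t)|=2|\langle y^{cl}(t),\dot y^{cl}(t)\rangle|\le 2ML$ for a.e.\ $t$, so $g$ is globally Lipschitz, in particular uniformly continuous; since $g\ge 0$ and $\int_0^\infty g<\infty$, Barbalat's lemma gives $g(t)\to 0$, i.e.\ $\lim_{t\to\infty}y^{cl}(t)=0$. (Equivalently: if $g(t_n)\ge\varepsilon$ for some $t_n\to\infty$, Lipschitz continuity forces $g\ge\varepsilon/2$ on intervals of fixed length around the $t_n$, contradicting integrability.) \emph{The main obstacle} is the uniform input bound in the second step: the bound on the funnel feedback from \cite{lanza2024exact} a priori depends on the parameters $(c_i^*,T_i^*)$ and on how closely the trajectory approaches the boundary, so one must verify it is uniform over the compact parameter range guaranteed by \Cref{cor:parametersBounded} — concretely, that $\omega_i$ stays bounded away from $1$ uniformly in $i$. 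Everything after $\dot y^{cl}\in L^\infty$ is the standard $L^2$-plus-uniform-continuity decay argument.
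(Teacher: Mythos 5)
The high-level strategy (an $L^2$ bound on $y^{cl}$ from \Cref{thm:boundedCosts}, plus uniform continuity of $t\mapsto\|y^{cl}(t)\|^2$, plus Barbalat) would indeed yield the claim, and your first step and the bound $\|y^{cl}\|_\infty\le \bar c H$ via \Cref{cor:parametersBounded} are fine. The genuine gap is exactly the step you flag as the main obstacle, and your sketched justification for it does not hold. You need $u^{cl}\in L^\infty([0,\infty);\R^m)$, which you reduce to a uniform-in-$i$ bound $\sup_i\omega_i<1$ on the worst output-to-funnel ratio, claiming this follows ``since all trajectories stay in the fixed compact set $K$ and all parameters in the fixed compact range $(0,\bar c\,]\times(0,H]$''. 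First, that parameter range is not compact (it is not closed at $0$). Second, and more importantly, the bound on $\omega_i$ coming from \Cref{Thm:ExactFunnelControl} (i.e.\ \cite{lanza2024exact}) is not uniform in the initial margin $\|\hat y_i\|/(c_i^*T_i^*)$: the feasible set \eqref{eq:feasibleset} only imposes the strict inequality $\|\hat y_i\|<c_i^*T_i^*$, and nothing in \Cref{alg:fmpc} prevents the optimizer from returning parameters with $c_i^*T_i^*$ arbitrarily close to $\|\hat y_i\|$ — the term $+c$ in \eqref{eq:valuefcn} even rewards tight funnels. Then $\omega_i\ge\|\hat y_i\|^2/(c_i^*T_i^*)^2$ can approach $1$ along the iterations, $\alpha_{c_i^*}(\omega_i)$ is unbounded over $i$, and since $N$ is a surjection onto $\R$ (hence unbounded) your bound $\bar u$ fails. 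Without a uniform input bound you do not get $\dot y^{cl}\in L^\infty$, hence no uniform continuity, and the Barbalat argument collapses. Note that global boundedness of $u^{cl}$ is not claimed anywhere in the paper either; the numerical example even highlights large inputs whenever the state approaches the funnel boundary.

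The paper's proof is structured precisely to avoid this global claim: it argues by contradiction and needs a bound on $u^{cl}$ only at isolated critical times, produced by the mean value theorem, at which $\|y^{cl}\|$ lies between $\tfrac{\varepsilon}{3}$ and $\tfrac{2}{3}\varepsilon$ and would have to change arbitrarily fast as the tail cost $\varepsilon_c\to 0$. At those times it shows (Cases 1--2 and Step 4) that the active funnel value exceeds $\varepsilon$ resp.\ $\tfrac{3}{4}\varepsilon$, so the ratio $\|y^{cl}\|/\varphi^{cl}$ is at most $\tfrac{2}{3}$ resp.\ $\tfrac{8}{9}$, whence \eqref{eq:controllaw} gives the local bound $\hat N\bigl(\alpha_c\bigl(\tfrac{8^2}{9^2}\bigr)\bigr)\tfrac{8}{9}$, contradicting the blow-up forced by continuity of $f$. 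To repair your route you would have to either establish the uniform margin $\sup_i\omega_i<1$ (e.g.\ by strengthening $\mathcal F_H$ with an explicit margin constraint, which changes the algorithm) or localize the input bound to suitable times in the spirit of the paper's argument.
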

	\begin{proof} 
        Seeking a contradiction, we assume 
		\begin{equation}\label{eq:TerminalConstraintsStabilityProof:nonconvergence}
			\exists\, \varepsilon >0 \ 
			\forall\, t_0 \geq 0 \ 
			\exists\, t > t_0 \colon \ 
			\|y^{cl}(t)\| > \varepsilon. 
		\end{equation}

        \emph{Step 1}: 
        Let $ h = t_{i+1} - t_i $.
        Since $Q$ is symmetric positive definite, there exists a constant $q>0$ such that 
        $
            \langle y, Q y \rangle \geq q \|y\|^2
        $
        for all $y \in \R^m$.
		For $ t \geq 0 $, define the closed-loop funnel function as  
        $\varphi^{cl}(t) \defeq \sum_{i=1}^{\infty} \chi_{[t_i,t_{i+1})}(t) \;\varphi(t-t_i; c_i^*, T_i^*)$. 
		For $x\in\R_{\geq 0}$, define 
        $\hat{N}(x) \defeq \max_{\xi\in\interval{0}{x}} \left| N(\xi) \right| $.

        \emph{Step 2}: We show that, given the assumption \eqref{eq:TerminalConstraintsStabilityProof:nonconvergence}, the control input $u^{cl}$  is unbounded. 
		According to \Cref{thm:boundedCosts}, $J^{cl}_\infty(H,y_0(0)) < \infty$. This implies 
		\begin{equation} \label{eq:TerminalConstraintsStabilityProof:boundedCostsImplication}
			\forall\, \varepsilon_c > 0 \
			\exists\, t_{\varepsilon_c} \in \R\colon 
			\int_{t_{\varepsilon_c}}^{\infty} \ell(y^{cl}(t),u^{cl}(t)) \;\text{d}t < \varepsilon_c
		\end{equation}
		and, since $Q$ is positive definite, 
		\begin{equation}\label{eq:TerminalConstraintsStabilityProof:boundedCostsDecay} 
			\forall\, t\ge 0\ \exists\, \tau_{\nicefrac{\varepsilon}{3}}(t)>t\colon \ 
			\|y^{cl}(\tau_{\nicefrac{\varepsilon}{3}}(t))\| < \tfrac{\varepsilon}{3}.
		\end{equation}
		So, combining \eqref{eq:TerminalConstraintsStabilityProof:nonconvergence}~--~\eqref{eq:TerminalConstraintsStabilityProof:boundedCostsDecay}, 
        
		for any given \mbox{$\varepsilon_c\in \interval[open]{0}{\frac{qh\varepsilon^2}{36}}$}, 
		we find $t_\varepsilon > \tau_{\nicefrac{\varepsilon}{3}}(t_{\varepsilon_c})$ 
		with $\|y^{cl}(t_\varepsilon)\| > \varepsilon$. 
            There exist
		\begin{align*}
			t_{\nicefrac{1}{3}} 
			&\defeq \max\{ t\in\R_{\geq 0} \mid \|y^{cl}(t)\| = \tfrac{1}{3}\varepsilon \;\wedge\; t<t_\varepsilon \}, \\
			t_{\nicefrac{2}{3}} 
			&\defeq \min\{ t\in\R_{\geq 0} \mid \|y^{cl}(t)\| = \tfrac{2}{3}\varepsilon \;\wedge\; t>t_{\nicefrac{1}{3}} \} 
		\end{align*}
		because $y^{cl}$ is continuous and $t_\varepsilon > \tau_{\nicefrac{\varepsilon}{3}}(t_{\varepsilon_c})$. 
		  We estimate 
		\begin{align*}
			\varepsilon_c 
			&\geq \int_{t_{\nicefrac{1}{3}}}^{t_{\nicefrac{2}{3}}} \ell(y^{cl}(t),u^{cl}(t)) \;\text{d}t
			\geq \int_{t_{\nicefrac{1}{3}}}^{t_{\nicefrac{2}{3}}} \langle y^{cl}(t) , Q y^{cl}(t) \rangle \;\text{d}t\\
			&\geq \int_{t_{\nicefrac{1}{3}}}^{t_{\nicefrac{2}{3}}} q\| y^{cl}(t) \|^2 \;\text{d}t
			\geq q \left( \frac{\varepsilon}{3} \right)^2 (t_{\nicefrac{2}{3}} - t_{\nicefrac{1}{3}}).
		\end{align*}
            Since  $y^{cl}$ is piecewise differentiable, the mean value theorem 
            yields the existence of $t_\text{crit} \in \interval{t_{\nicefrac{1}{3}}}{t_{\nicefrac{2}{3}}}$ with
		\begin{align*}
		      \dd{t}\|{y}^{cl}(t_\text{crit})\| \geq 
			\tfrac{\frac{2}{3}\varepsilon - \frac{1}{3}\varepsilon}{t_{\nicefrac{2}{3}} - t_{\nicefrac{1}{3}}} 
			= \tfrac{\varepsilon}{3(t_{\nicefrac{2}{3}} - t_{\nicefrac{1}{3}})} 
                \geq \tfrac{3\varepsilon^3q}{\varepsilon_c}
			\xrightarrow[\varepsilon_c \to 0]{} \infty.
		\end{align*}
		Continuity of~$f$ and $ \| y^{cl}(t_\text{crit}) \| \leq \frac{2}{3}\varepsilon < \infty$ imply
		\begin{align} \label{eq:TerminalConstraintsStabilityProof:explodingInput}
			\| u^{cl}(t_\text{crit}) \| \xrightarrow[\varepsilon_c \to 0]{} \infty.
		\end{align}
        
        \emph{Step 3}:
        We will lead statement \eqref{eq:TerminalConstraintsStabilityProof:explodingInput}
        to a contradiction by establishing a bound on $ \| u^{cl}(t_\text{crit}) \|$.
        To this end, we distinguish two cases. 
        
        \textbf{Case 1:} $\varphi^{cl}(t_\text{crit}) > \varepsilon$. Since $\|y^{cl}(t_\text{crit})\| \leq \frac{2}{3}\varepsilon$ and $\varphi^{cl}(t_\text{crit}) > \varepsilon$, 
        we have $\frac{\|y^{cl}(t_\text{crit})\|}{\varphi^{cl}(t_\text{crit})} < \frac{2}{3}$.
		Then, \changed{with $c$ being the active parameter at time $t_\text{crit}$,}
		\begin{align*}
			\| u^{cl}(t_\text{crit}) \| 
			&= \left\| N \left( \changed{\alpha_c} \left(  \tfrac{\|y^{cl}(t_\text{crit})\|^2}{\varphi^{cl}(t_\text{crit})^2} \right) \right)  \tfrac{y^{cl}(t_\text{crit})}{\varphi^{cl}(t_\text{crit})} \right\| \\
			&\leq \hat{N} \left( \changed{\alpha_c} \left( \tfrac{4}{9}  \right) \right) \tfrac{2}{3}
			<\infty
		\end{align*}
		contradicting \eqref{eq:TerminalConstraintsStabilityProof:explodingInput}.

		\textbf{Case 2:} $\varphi^{cl}(t_\text{crit}) \leq \varepsilon$. 
            In this case, a similar chain of arguments can be performed. 
		Define 
			$
            \hat{t}_{\nicefrac{1}{3}} 
			\defeq \min\{ t\in\R_{\geq 0} \mid \|y^{cl}(t)\| = \tfrac{1}{3}\varepsilon \;\wedge\; t>t_\varepsilon \}, $ 
			and $
			\hat{t}_{\nicefrac{2}{3}} 
			\defeq \max\{ t\in\R_{\geq 0} \mid \|y^{cl}(t)\| = \tfrac{2}{3}\varepsilon \;\wedge\; t<\hat{t}_{\nicefrac{1}{3}} \}$, where
        $t_{\nicefrac{1}{3}}$ exists due to~\labelcref{eq:TerminalConstraintsStabilityProof:boundedCostsDecay}, 
			and~$t_{\nicefrac{2}{3}}$ due to the intermediate value theorem.

        We estimate
        \begin{align*}
            \varepsilon_c 
            & \geq \int_{\hat{t}_{\nicefrac{2}{3}}}^{\hat{t}_{\nicefrac{1}{3}}} \ell(y^{cl}(t),u^{cl}(t)) \;\text{d}t\\
            & \geq \int_{\hat{t}_{\nicefrac{2}{3}}}^{\hat{t}_{\nicefrac{1}{3}}} y^{cl}(t)^\T Q y^{cl}(t) \;\text{d}t
            \geq q \left( \frac{\varepsilon}{3} \right)^2 (\hat{t}_{\nicefrac{1}{3}} - \hat{t}_{\nicefrac{2}{3}}).
        \end{align*}
        The mean value theorem yields the existence of $\hat{t}_\text{crit} \in \interval{\hat{t}_{\nicefrac{2}{3}}}{\hat{t}_{\nicefrac{1}{3}}}$ with
		\begin{align*}
		      \dd{t}\|{y}^{cl}(t_\text{crit})\| \leq 
			\tfrac{\frac{1}{3}\varepsilon - \frac{2}{3}\varepsilon}{\hat{t}_{\nicefrac{1}{3}} - \hat{t}_{\nicefrac{2}{3}}} 
			= \tfrac{-\varepsilon}{3(\hat{t}_{\nicefrac{1}{3}} - \hat{t}_{\nicefrac{2}{3}})} 	
		      \leq \tfrac{-\varepsilon^3q}{\varepsilon_c} 	
			\xrightarrow[\varepsilon_c \to 0]{} -\infty.
		\end{align*}
		Since $f$ is continuous and $ \| y^{cl}(\hat{t}_\text{crit}) \| \leq \frac{2}{3}\varepsilon < \infty$, this implies 
        \changed{$\| u^{cl}(\hat{t}_\text{crit}) \| \to \infty$ for~$\varepsilon_c \to 0$.}

            \noindent
            \emph{\changed{Step 4}}:
		We show $\varphi^{cl}(\hat{t}_\text{crit}) > \frac{3}{4}\varepsilon$. 
		Seeking a contradiction, assume $\varphi^{cl}(\hat{t}_\text{crit}) \leq \frac{3}{4}\varepsilon$. 
		Since $\varphi^{cl}(t_\text{crit}) \leq \varepsilon$ and $\varphi^{cl}(t_\varepsilon) > \varepsilon$, there must be a resampling time between $t_\text{crit}$ and $t_\varepsilon$, i.e., 
        there exists~$i^* \in \N$ such that~$t_{i^*} \in \interval[open right]{t_\text{crit}}{t_\varepsilon}$.

            W.l.o.g. $T_i^* > h$ (otherwise we have convergence, since $ y^{cl}(t) = 0 $ for $t\geq t_{i^*}+h$).
		Since $\varphi^{cl}(t_{i^*}) > \varepsilon$,
		
            $\varphi^{cl}(t_{i^*}+\tau) > \varepsilon \left( 1 - \tfrac{\tau}{h} \right) $
            for  $\tau \in \interval[open right]{0}{h}$.
		
		This implies, if $\varphi^{cl}(\hat{t}_\text{crit}) \leq \frac{3}{4}\varepsilon$, then 
		$ \hat{t}_\text{crit} - t_{i^*} > \frac{h}{4} $. 
		But $ \| y^{cl}(t) \| \geq \frac{\varepsilon}{3} $ for $t\in\interval{t_{i^*}}{\hat{t}_\text{crit}} \subseteq \interval{t_{\nicefrac{1}{3}}}{\hat{t}_{\nicefrac{1}{3}}}$.
        This leads to 
		\begin{align*}
			\varepsilon_c
			& > \int_{t_{i^*}}^{\infty} \ell (y^{cl}(t), u^{cl}(t)) \;\text{d}t
			     > q\int_{t_{i^*}}^{\hat{t}_\text{crit}} \| y^{cl}(t) \|^2 \;\text{d}t \\
			& > q \, (\hat{t}_\text{crit} - t_{i^*}) \left( \tfrac{\varepsilon}{3} \right)^2 
    			 > q \; \tfrac{h}{4} \left( \tfrac{\varepsilon}{3} \right)^2 
    			 > \varepsilon_c,
		\end{align*}
        a contradiction.  Therefore, $ \| y^{cl} (\hat{t}_\text{crit}) \| \leq \frac{2}{3}\varepsilon$
        and $\varphi^{cl}(\hat{t}_\text{crit}) > \frac{3}{4}\varepsilon$.
        It follows that, \changed{with $c$ being the active parameter at time $\hat{t}_\text{crit}$,}
		\begin{align*}
			\| u^{cl}(\hat{t}_\text{crit}) \| 
			&= \left\| N \left( \changed{\alpha_c} \left(  \tfrac{\|y^{cl}(\hat{t}_\text{crit})\|^2}{\varphi^{cl}(\hat{t}_\text{crit})^2} \right) \right)  \tfrac{y^{cl}(\hat{t}_\text{crit})}{\varphi^{cl}(\hat{t}_\text{crit})} \right\| \\
			&\leq \hat{N} \left( \changed{\alpha_c} \left( \tfrac{8^2}{9^2} \right) \right) \tfrac{8}{9} <\infty,
		\end{align*}
        \changed{which contradicts $\| u^{cl}(\hat{t}_\text{crit}) \| \to \infty$ for~$\varepsilon_c \to 0$.}

	\end{proof}
    \section{Numerical Example}
    
    We illustrate \Cref{alg:fmpc} 
    
    considering the 
    system
    \begin{equation}\label{eq:ExampleSys}
        \begin{small}
        \dot{y}(t) = \begin{pmatrix}
            y_1(t)^2 + y_1(t) \\ y_2(t)^2 + y_1(t)
        \end{pmatrix}  - u(t) , \quad
        y(0) = \begin{pmatrix*}[r]3\\-3\end{pmatrix*}.
        \end{small}
    \end{equation}
    In the stage costs~\eqref{eq:stageCosts}, we set $Q=I_2$ and $R=0.2\cdot I_2$, where $I_2 \in \R^{2 \times 2}$ denotes the identity matrix.
    
    As control parameters, we choose $ N(s) = s $ (which is possible due to known control direction \cite[Rem.~2.5]{lanza2024exact}) and $\alpha_c(s) = \tfrac{2c}{1-s}$, and set~$\psi(t) = \infty $ \changed{(disabled outer funnel).}
    
    As described in \cite[Sec.~4.1]{lanza2024exact}, because $\lim_{t \to T} \varphi(t) = 0 $, simulation is only possible on an interval \changed{$[t_i, t_i + t_\text{max}]$} with $t_\text{max}<T $. 
    
    We choose $t_\text{max} = T-10^{-9}/c $ to guarantee a spatial accuracy of $10^{-9}$ \changed{in the prediction}. 
    
    \changed{The closed-loop system is simulated 
    on the interval~$[0,3]$.
    }
    
    The prediction horizon is chosen as $H=5$ and the step size as $h=0.25$. 
    This numerical experiment was performed in \textsc{Matlab R2024b}, using \texttt{ode45} ($\texttt{AbsTol}=10^{-9}$, $\texttt{RelTol}=10^{-6}$) and \texttt{fmincon} as optimization algorithm (using default parameterization).
    The closed-loop system output norm $\|y^{cl}\|_2$, as well as the closed-loop funnel $\varphi^{cl}$ is plotted in \Cref{fig:sim:output}. 
    \changed{
    We observe that the system asymptotically converges to the equilibrium. 
    Furthermore, although no outer funnel is active, the product $c_i^*  T_i^*$ decreases in each step, meaning 
    $\varphi(0;c^*_{i+1},T_{i+1}^*) < \varphi(0;c^*_{i},T_{i}^*)$. 
    }
    \changed{
    The input is shown in \Cref{fig:sim:input}, where the previous observation is visible in terms of increasing input values whenever the state is close to the funnel boundary.
    }
    \begin{figure}[hbtp]
    \begin{subfigure}[b]{0.48\textwidth}
        \centering
        \includegraphics[width=.9\linewidth]{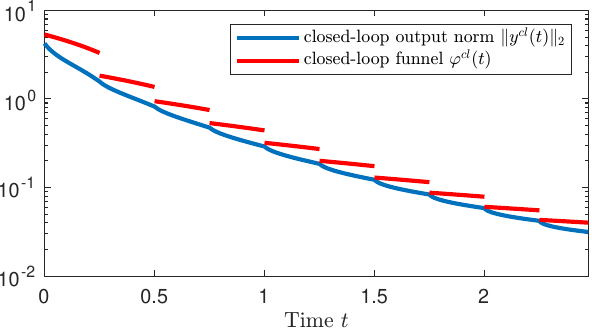}
        \caption{Output norm and funnel.}
        \label{fig:sim:output}
    \end{subfigure}
      \begin{subfigure}[b]{0.48\textwidth}
        \centering
        \includegraphics[width=.9\linewidth]{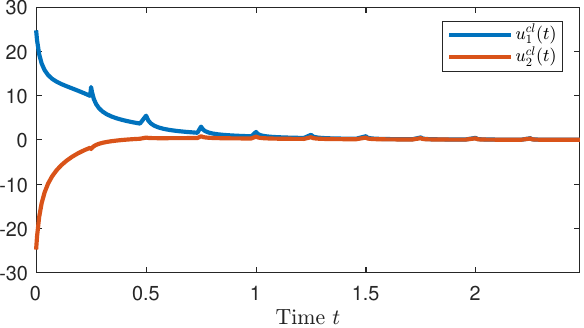}
        \caption{Control input.}
        \label{fig:sim:input}
    \end{subfigure}
    \caption{Simulation of system~\eqref{eq:ExampleSys} under the control generated by~\Cref{alg:fmpc}.}
    \end{figure}
    
    \section{Conclusion and Outlook}
    We presented model predictive funnel control, a novel combination of funnel control and MPC. 
    It features a constant number of decision variables in the optimization problem and yields 
    \changed{closed-loop guarantees}
    
    even on inter-sampling intervals. 
    We rigorously showed boundedness of the closed-loop costs, as well as convergence of the solution. 
    Future work will address systems of higher relative degree with internal dynamics, relaxing the choice of funnel boundary ($\varphi(T) \neq 0$), considering input constraints, and performing reference tracking tasks, to name a few of the upcoming topics. 
    \changed{We will conduct numerical studies to analyse performance in the sense of computation time compared to MPC in specific scenarios, e.g., for long prediction horizons or fine discretization grids. }

\bibliographystyle{IEEEtran}
\bibliography{references}
	
\end{document}